\newtheorem{theorem}{Theorem}[section]
\titleformat{\section}{\bfseries\Large}{\thesection}{1em}{}
\titleformat{\subsection}{\bfseries\normalsize}{\thesubsection}{1em}{}
\theoremstyle{definition}
\numberwithin{equation}{section}
\newtheoremstyle{boldquestion}  
{\topsep}                      
{\topsep}                      
{\normalfont}                 
{}                            
{\bfseries}                   
{.}                           
{ }                           
{\thmname{#1}\thmnumber{ #2}} 
\theoremstyle{boldquestion}
\newtheorem{question}{Question}
\newtheoremstyle{boldremark}  
{}{}                     
{\normalfont}              
{}                         
{\bfseries}                
{.}                        
{ }                       
{}
\theoremstyle{boldremark}
\newtheorem{remark}{Remark}
\renewenvironment{proof}[1][\proofname]{%
	\par\pushQED{\qed}\normalfont
	\topsep6\p@\@plus6\p@\relax
	\trivlist
	\item[\hskip\labelsep\bfseries #1\@addpunct{.}]%
}{%
	\popQED\endtrivlist\@endpefalse
}
\begin{document}
\setcounter{page}{1}


\title[]{On Fair and Tolerant Colorings of Graphs}
\author[]{Saeed Shaebani}
\thanks{{\scriptsize
\hskip -0.4 true cm MSC(2020): Primary: 05C15; Secondary: 05C07.
\newline Keywords: Vertex Degrees, Fair And Tolerant Vertex Coloring, FAT Chromatic Number.\\
 }}

\begin{abstract}
\noindent
A (not necessarily proper) vertex coloring of a graph $G$ with color classes $V_1$, $V_2$, $\dots$, $V_k$,
is said to be a {\it Fair And Tolerant vertex coloring of $G$ with $k$ colors}, whenever $V_1$, $V_2$, $\dots$, $V_k$ are nonempty and
there exist two real numbers $\alpha$ and $\beta$ such that
$\alpha \in [0,1]$ and $\beta \in [0,1]$ and the following condition holds for each arbitrary vertex $v$ and every arbitrary color class $V_i$:
$$
\bigl| V_i \cap N (v) \bigr| =
\begin{cases}
	\alpha \deg (v)  &  \mbox{ if }  \ \   v \notin V_i \\
	\beta \deg (v)   &  \mbox{ if }  \ \   v \in V_i . 
\end{cases}
$$
The {\it FAT chromatic number} of $G$, denoted by $\chi ^{{\rm FAT}} (G)$, is defined as the maximum positive integer $k$ for which $G$ admits a 
Fair And Tolerant vertex coloring with $k$ colors.
The concept of the FAT chromatic number of graphs was introduced and studied by Beers and Mulas, where they
asked for the existence of a function $f \colon  \mathbb{N} \to \mathbb{R}$
in such a way that the inequality
$\chi ^{{\rm FAT}} (G) \ \leq \ f \big( \chi (G) \big)$
holds for all graphs $G$.
Another similar interesting question concerns the existence of some function
$g \colon  \mathbb{N} \to \mathbb{R}$
such that the inequality
$\chi (G) \ \leq \ g \left( \chi ^{{\rm FAT}} (G) \right)$
holds for every graph $G$.
In this paper, we establish that both questions admit negative resolutions.
\end{abstract}

\maketitle
\section{Introduction}

In this paper, we investigate coloring parameters of graphs. 
Throughout the text, except when $[\,\cdot\,]$ indicates bibliographic references, 
the shorthand $[n]$ will denote the set $\{1,2,3,\dots,n\}$.

Our focus is on simple graphs whose vertex sets are nonempty and finite. 
Standard graph-theoretic notation is employed, following classical references such as \cite{Bollobas1998},
\cite{BondyMurty2008}, \cite{Diestel2017}, and \cite{West2001}.

For a graph $G$ and a vertex $v$ of $G$, the {\it neighborhood} of $v$, which is denoted by $N_G (v)$, is defined as
the set of all vertices that are adjacent to $v$. Each member of $N_G (v)$ is called a
{\it neighbor} of $v$. The size of $N_G (v)$ is called the {\it degree} of $v$, and it is denoted by $\deg _G (v)$.
Whenever there is no ambiguity on the graph $G$, and the graph 
$G$ is clear from context, we just write $N(v)$ and $\deg (v)$ instead
of $N_G (v)$ and $\deg_G (v)$, respectively.

\noindent
By a {\it vertex coloring} of a graph $G$, we mean a function
$c \colon V(G) \to C ,$
where $C$ is called the {\it set of colors}, and its elements are referred to as {\it colors}.
If $c(v) = r$, we say that the vertex $v$ is assigned the color $r$. 

\noindent
For each color $r$ in $C$, the set of all vertices $v$ for which $c(v) = r$, which is equal to $c^{-1} (r)$,
is said to be the {\it color class of the color} $r$.

\noindent
The set of nonempty color classes of a coloring $c \colon V(G) \to C$
provides a partition of $V(G)$.
Regardless of the specific names of the colors, this partition divides $V(G)$ into pairwise disjoint blocks,
each consisting of all vertices assigned the same color.
Thus, a partition of $V(G)$ may be naturally regarded as the set of nonempty color classes induced by some vertex coloring of 
$G$.

Let $G$ be a graph, $v$ be an arbitrary vertex of $G$, and $S$ be an arbitrary subset of $V(G)$.
The symbol $e (v,S)$ stands for the number of neighbors of $v$ that are also included in $S$. More precisely, $e (v,S)$
is equal to
$$e (v,S) := \bigl| S \cap N(v) \bigr| .$$

\noindent A coloring $c \colon  V(G) \to C $ is called {\it proper} if no two adjacent vertices are assigned the same color.
The {\it chromatic number} of $G$, denoted by $\chi (G)$, is defined as the minimum size of a set $C$ that a proper coloring
$c \colon  V(G) \to C $ exists.

Let $G$ be a graph, and $C$ be a set of size $k$.
Also, let $c \colon V(G) \to C$ be a (not necessarily proper) vertex coloring of $G$ with color classes $V_1$, $V_2$, $\dots$, $V_k$.
The coloring $c \colon V(G) \to C$ is called
a {\it Fair And Tolerant vertex coloring of $G$ with $k$ colors}, whenever $V_1$, $V_2$, $\dots$, $V_k$ are nonempty and
there exist two real numbers $\alpha$ and $\beta$ such that
$\alpha \in [0,1]$ and $\beta \in [0,1]$ and the following condition holds for each arbitrary vertex $v$ and every arbitrary color class $V_i$:
$$
e \left( v, V_i \right) =
\begin{cases}
	\alpha \deg (v)  &  \mbox{ if }  \ \   v \notin V_i \\
	\beta \deg (v)   &  \mbox{ if }  \ \   v \in V_i . 
\end{cases}
$$

\medspace

\noindent
The concept of Fair And Tolerant vertex colorings was introduced and investigated by Beers and Mulas in \cite{beers20252},
and for brevity they used the abbreviated phrase “{\it FAT Colorings}”.
Also, the abbreviated phrase “{\it FAT $k$-coloring}” was used in \cite{beers20252} instead of "Fair And Tolerant vertex coloring with $k$ colors".

\medskip

Let $G$ be a graph with at least one vertex $v$ of nonzero degree. Suppose that $G$ admits a FAT $k$-coloring
with color classes $V_1$, $V_2$, $\dots$, $V_k$ and corresponding parameters
$\alpha$ and $\beta$.
Then we have:
$$\deg (v) = \displaystyle\sum _{i=1} ^k  e \bigl( v , V_i\bigr) =	\beta \deg (v) + (k-1) \alpha \deg (v) ;$$
and therefore, the equality
$$\beta + (k-1) \alpha = 1$$
holds \cite{beers20252} as a relation between $k$ and $\alpha$ and $\beta$.

\medskip

For every graph $G$, a FAT $1$-coloring is obtained by setting $\alpha = 0$ and $\beta = 1$, and taking the entire vertex set $V(G)$
as the sole color class. Therefore, the set
$$\bigl\{ k \colon \ \mbox{The graph } G \mbox{ admits a FAT } k \mbox{-coloring} \bigr\}$$
is nonempty. Also, this set is bounded above, because the number of color classes in every FAT coloring of $G$ never exceeds $|V(G)|$.
Therefore, this set has a maximum value, which is called the {\it FAT chromatic number} of $G$, and it is denoted by $\chi ^{{\rm FAT}} (G)$.
More precisely,
$$\chi ^{{\rm FAT}} (G) \  := \ \max \bigl\{ k \colon \ \mbox{The graph } G \mbox{ admits a FAT } k \mbox{-coloring} \bigr\} .$$
The concept of the FAT chromatic number of graphs was introduced and studied by Beers and Mulas \cite{beers20252} in 2025.

\medskip

\begin{remark}
	In the definition of FAT colorings, it is required that all color classes be nonempty
	to ensure that the FAT chromatic number is well-defined. Otherwise, if empty color classes were permitted in FAT colorings,
	then for any arbitrary graph $G$ and any positive integer $k$,
	the monochromatic coloring
	$c \colon V(G) \to [k]$
	that assigns color $1$ to every vertex of $G$ would be a FAT $k$-coloring with corresponding parameters  
	$\alpha = 0$ and $\beta = 1$. Hence, a FAT $k$-coloring would exist for every 
	$k$, and the notion of FAT chromatic number would be rendered meaningless.
\end{remark}

It was shown by Beers and Mulas \cite{beers20252} that there exist some graphs $G_1$, $G_2$, and $G_3$
in such a way that the following three conditions hold:
$$\begin{array}{lcccr}
	     \chi ^{{\rm FAT}} \left(  G_1  \right)  <   \chi \left( G_1 \right)  ,  &
	         &
	     \chi ^{{\rm FAT}} \left(  G_2  \right)  =   \chi \left( G_2 \right)  ,  &
	         &    
	     \chi ^{{\rm FAT}} \left(  G_3  \right)  >   \chi \left( G_3 \right)  .
\end{array}$$
Therefore, the parameters $\chi ^{{\rm FAT}} (G)$ and $\chi(G)$ may coincide, or one may be strictly greater or strictly smaller than the other.
So, the following interesting open questions were raised in \cite{beers20252} by Beers and Mulas.

\medskip

\begin{question} \label{Q1}
	What is the maximum possible gap between $\chi ^{{\rm FAT}} $ and $\chi $?
\end{question}

\medskip

\begin{question} \label{Q2}
	Is $\chi ^{{\rm FAT}} $ bounded in terms of $\chi $? More precisely, does there exist a function $f \colon  \mathbb{N} \to \mathbb{R}$
	in such a way that the inequality
	$$\chi ^{{\rm FAT}} (G) \ \leq \ f \big( \chi (G) \big)$$
	holds for all graphs $G$?
\end{question}

\medskip

\noindent
A similar question could also be raised as follows.

\medskip

\begin{question} \label{Q3}
	Is $\chi$ bounded in terms of $\chi ^{{\rm FAT}}  $? In other words, does there exist a function
	$g \colon  \mathbb{N} \to \mathbb{R}$
	such that the inequality
	$$\chi (G) \ \leq \ g \bigg( \chi ^{{\rm FAT}} (G) \bigg)$$
	holds for every graph $G$?
\end{question}

\medskip

\begin{question} \label{Q4}
	Does there exist a class of graphs $\mathcal{G}$ for which $\chi (G)$ is bounded over $\mathcal{G}$
	while $\chi ^{{\rm FAT}} (G)$ diverges when $G$ ranges over $\mathcal{G}$?
	
\end{question}

\medskip

\begin{question} \label{Q5}
	Does there exist a class of graphs $\mathcal{G}$ such that $\chi ^{{\rm FAT}} (G)$ is bounded over $\mathcal{G}$
	whereas $\chi (G)$ diverges when $G$ ranges over $\mathcal{G}$?
\end{question}

\medskip

\noindent
In this paper, we aim to answer these 5 questions.

\section{Main Results}

This section is devoted to addressing Questions \ref{Q1}, \ref{Q2}, \ref{Q3}, \ref{Q4}, and \ref{Q5}.
To this end, we divide the discussion into two cases: disconnected graphs and connected graphs.

\subsection{The Case of Disconnected Graphs}

In this subsection, we aim to answer Questions \ref{Q1}, \ref{Q2}, \ref{Q3}, \ref{Q4}, and \ref{Q5}
for the class of disconnected graphs.
In this regard, we provide the following two theorems.

\begin{theorem}  \label{Disconnected1}
	For every two positive integers $L_1$ and $L_2$ with $L_1 < L_2$, there exists a disconnected graph $G$ for which
	$\chi (G) = L_1$ and $\chi ^{{\rm FAT}} (G) = L_2$.
\end{theorem}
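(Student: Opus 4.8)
The plan is to realize $G$ as the disjoint union of a clique and a large edgeless graph: set
$$G \;:=\; K_{L_1} \;\sqcup\; \overline{K_{L_2-1}},$$
that is, a complete graph on $L_1$ vertices together with $L_2-1$ isolated vertices (when $L_1=1$ this is just $\overline{K_{L_2}}$). Since $L_2-1\ge 1$, the graph $G$ has at least two components and is therefore disconnected; and since $\chi(K_{L_1})=L_1$ while an isolated vertex needs only one colour, we have $\chi(G)=L_1$. The real work is to show $\chi^{{\rm FAT}}(G)=L_2$.

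For the lower bound I would write down an explicit FAT $L_2$-colouring of $G$: let $V_1$ be the vertex set of the $K_{L_1}$-component, let $V_2,\dots,V_{L_2}$ be the $L_2-1$ singleton classes consisting of the isolated vertices, and take $\alpha=0$, $\beta=1$ (consistent with $\beta+(L_2-1)\alpha=1$). The verification is routine: a clique vertex $v$ has all $\deg(v)=L_1-1$ of its neighbours inside $V_1$, so $e(v,V_1)=\deg(v)=\beta\deg(v)$ and $e(v,V_i)=0=\alpha\deg(v)$ for $i\ge 2$, while every isolated vertex has degree $0$ and satisfies the condition vacuously. Hence $\chi^{{\rm FAT}}(G)\ge L_2$.

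The heart of the matter --- and the step I expect to require the most care --- is the reverse inequality $\chi^{{\rm FAT}}(G)\le L_2$. Here one takes an arbitrary FAT $k$-colouring of $G$ with classes $V_1,\dots,V_k$ and parameters $\alpha,\beta$, and argues $k\le L_2$. If $L_1=1$ this is trivial, since $k\le|V(G)|=L_2$; so assume $L_1\ge 2$, in which case $G$ has an edge and the identity $\beta+(k-1)\alpha=1$ recorded in the excerpt holds. The argument then forks on the value of $\alpha$. If $\alpha=0$ then $\beta=1$, so every positive-degree vertex has all of its neighbours in its own class; as $K_{L_1}$ is connected, its $L_1$ vertices all lie in one class, and the remaining $k-1$ classes are nonempty and contained in the set of $L_2-1$ isolated vertices, whence $k-1\le L_2-1$. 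If instead $\alpha\ne 0$, then every class must contain a clique vertex --- otherwise some $V_i$ would consist only of isolated vertices and any clique vertex $v\notin V_i$ would give the contradiction $0=e(v,V_i)=\alpha\deg(v)=\alpha(L_1-1)\ne 0$ --- so $k\le L_1<L_2$. Either way $k\le L_2$, and together with the lower bound this yields $\chi^{{\rm FAT}}(G)=L_2$. The only genuinely delicate point is this dichotomy on $\alpha$: one must observe that a class consisting solely of degree-zero vertices is invisible to the fairness constraints precisely when $\alpha=0$, and then rule out colourings with too many classes in both regimes.
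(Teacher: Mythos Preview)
Your proof is correct. The construction $K_{L_1}\sqcup\overline{K_{L_2-1}}$ works, and your dichotomy on $\alpha$ cleanly handles the upper bound $\chi^{\rm FAT}(G)\le L_2$: when $\alpha=0$ the clique collapses into one class and the remaining $k-1$ nonempty classes must live inside the $L_2-1$ isolated vertices; when $\alpha\ne 0$ every class must meet the clique, forcing $k\le L_1<L_2$.

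The paper takes a different construction for $L_1\ge 2$: it uses $L_2$ vertex-disjoint copies of $K_{L_1}$ (so $L_1L_2$ vertices, versus your $L_1+L_2-1$). Its upper-bound argument is organized around whether the first clique $S_1$ meets every color class, which is essentially the same dichotomy you use---if some class misses $S_1$ one deduces $\alpha'=0$, $\beta'=1$, and then shows each clique lies in a single class, yielding a surjection from the $L_2$ cliques onto the $k$ classes. Your version is more economical in vertex count and slightly more direct, since isolated vertices impose no fairness constraints at all; the paper's version has the mild advantage of avoiding degree-zero vertices (so every vertex genuinely participates in the FAT condition), but both arguments rest on the same observation that a class disjoint from a positive-degree component forces $\alpha=0$.
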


\begin{proof}
	First, we prove the theorem for the case where $L_1 = 1$.
	
	\noindent
	Let us regard the edgeless graph $\bar{K}_{L_2}$ with $L_2$ vertices whose vertex set equals
	$\left\{ x_1 , x_2 , \dots , x_{L_2} \right\}$. This graph is the complement of the complete graph $K_{L_2}$.
	
	\noindent
	The chromatic number of $\bar{K}_{L_2}$ is equal to $1$.
	Now the singletons $\left\{ x_1 \right\}$, $\left\{ x_2 \right\}$, $\dots$, $\left\{ x_{L_2} \right\}$
	constitute the color classes of a FAT $L_2$-coloring of $\bar{K}_{L_2}$ with corresponding parameters
	$\alpha = 0$ and $\beta = 1$. Therefore, $\chi ^{{\rm FAT}} \big( \bar{K}_{L_2} \big) \geq L_2 $.
	
	\noindent
	Since the definition of the notion of FAT coloring stipulates that every color class is nonempty, it follows that
	the number of color classes in every FAT coloring of $\bar{K}_{L_2}$ is less than or equal to the number of vertices
	of $\bar{K}_{L_2}$. Therefore,
	$\chi ^{{\rm FAT}} \big( \bar{K}_{L_2} \big) \leq \left| V \left( \bar{K}_{L_2} \right)  \right| = L_2 $.
	Accordingly, $\chi \big( \bar{K}_{L_2} \big) = 1$ and $\chi ^{{\rm FAT}} \big( \bar{K}_{L_2} \big) = L_2 $
	for each positive integer $L_2$ which is greater than $1$.
	
	Having treated the case $L_1 = 1$, we now assume 
	$L_1 \geq 2$
	for the rest of the proof.
	
	Let us consider $L_2$ pairwise disjoint sets $S_1$, $S_2$, $\dots$, $S_{L_2}$, each of size $L_1$, as follows:
	$$\begin{array}{lccr}
		S_i := \left\{ x ^{(i)} _1  ,   x ^{(i)} _2  ,  \dots ,  x ^{(i)} _{L_1}    \right\} &   & \ {\rm for}   &   i=1,2, \dots , L_2 .
	\end{array}$$
	\medskip
	Now, define a graph $G$ with $L_1 L_2$ vertices whose vertex set equals
	$$ V \left( G \right) := S_1 \cup S_2 \cup \dots \cup S_{L_2} ; $$
	and its edge set equals
	$$ E (G) := \left\{   x ^{(i)} _j  x ^{(i)} _k  \colon \ \ i \in \left[ L_2 \right]  \ {\rm and} \ 1 \leq j < k \leq L_1 \right\} . $$
    Indeed, the graph $G$ is a union of $L_2$ vertex disjoint cliques, each of order $L_1$.
    
    \noindent
    Since $G$ is a union of $L_2$ vertex disjoint cliques, each with chromatic number $L_1$, it follows that the chromatic number of $G$ is also $L_1$.
    
    Now, we concern the FAT chromatic number of $G$.
    If we assign the color $i$ to all vertices of $S_i$ for $i = 1 , 2 , \dots , L_2$, then
    $S_1$, $S_2$, $\dots$, $S_{L_2}$
    constitute the color classes of a FAT $L_2$-coloring of $G$ with corresponding parameters $\alpha = 0$ and $\beta = 1$.
    Accordingly,
    $$\chi ^{{\rm FAT}} (G) \geq L_2 .$$
    Our goal is to prove that $\chi ^{{\rm FAT}} (G) = L_2 .$
    
    \noindent
    Put $\chi ^{{\rm FAT}} (G) = k$, and consider some FAT $k$-coloring of $G$ with $k$ nonempty
    color classes
    $T_1$, $T_2$, $\dots$, $T_k$, together with some corresponding parameters
    $\alpha ' \in [0,1]$ and $\beta ' \in [0,1]$.
    Since $\chi ^{{\rm FAT}} (G) \geq L_2 $, it suffices to show that $k \leq L_2$.    
    \noindent
    We proceed by considering two cases.
    
    \medskip
    
    \noindent
    {\bf Case I:} The case where $S_1 \cap T_b \neq \varnothing$ for every $b \in [k]$.
    
    \noindent
    In this case, each of $T_1$, $T_2$, $\dots$, $T_k$ intersects $S_1$. Hence, the number of color classes
    must be less than or equal to the number of vertices in $S_1$. Thus, $k \leq \left| S_1 \right| = L_1$.
    Now, the condition $L_1 < L_2$ implies $k < L_2$; and we are done in this case.
    
    \medskip
    
    \noindent
    {\bf Case II:} The case where $S_1 \cap T_b = \varnothing$ for some $b \in [k]$.
    
    \noindent
    In this case, since $x ^{(1)} _1 \in S_1$, all neighbors of $x ^{(1)} _1$ also belong to the clique $S_1$. Therefore,
    the condition $S_1 \cap T_b = \varnothing$ implies
    $$ e \left(  x ^{(1)} _1  ,  T_b  \right) = 0. $$
    On the other hand, because of $ x ^{(1)} _1 \notin T_b $, it follows that
    $$ e \left(  x ^{(1)} _1  ,  T_b  \right) = \alpha ' \ldotp \deg_{G} \left(  x ^{(1)} _1  \right) = \alpha ' \ldotp \left( L_1  -  1 \right) .$$
    Therefore,
    $$\alpha ' \ldotp \left( L_1  -  1 \right) = 0 .$$
    Now, since $L_1 \geq 2$, we conclude that
    $$\alpha ' = 0 .$$
    Also, by the condition
    $\beta '  +  (k-1) \ldotp \alpha ' = 1$, one finds that
    $$\beta '  =  1  .$$
    For each arbitrary $ S_i \in \left\{ S_1 , S_2 , \dots , S_{L_2} \right\}$,
    define
    $$a_i := \min \left\{ j \colon \ \ j \in [k]  \  {\rm and}  \  S_i \cap T_j \neq \varnothing  \right\} ;$$
    and consider some vertex $z$ in $S_i \cap T_{a_i}$. Since $z \in T_{a_i}$, it follows that
    $$ e \left( z , T_{a_i}\right) = \beta ' \ldotp \deg _G (z) = \deg _G (z) .$$
    Thus, all neighbors of $z$ must also belong to the $T_{a_i}$. Accordingly,
    $$S_i \subseteq T_{a_i} .$$
    This shows that for each arbitrary clique $ S_i \in \left\{ S_1 , S_2 , \dots , S_{L_2} \right\}$,
    there is a unique color class in $\left\{ T_1 , T_2 , \dots , T_k \right\}$ that intersects $S_i$; and besides,
    the clique $S_i$ is entirely contained in that unique color class.
    Moreover, every arbitrary color class $T_j$ is equal to the union of all
    cliques $S_{\ell}$ for which $S_{\ell} \cap  T_j \neq \varnothing$.
    Therefore, because $T_1$, $T_2$, $\dots$, $T_k$ are nonempty, 
    the function
    $$\begin{array}{rcl}
    	 f \colon \ \left\{ S_1 , S_2 , \dots , S_{L_2} \right\} & \longrightarrow & \left\{ T_1 , T_2 , \dots , T_k \right\} \\
    	                             S_i                        & \longmapsto     &              T_{a_i}
    \end{array}$$
    is well-defined and surjective. Accordingly, $k \leq L_2$, which completes the proof.
    
\end{proof}

\begin{theorem}  \label{Disconnected2}
	For every two positive integers $L_1$ and $L_2$ with $1 < L_1 < L_2$, there exists a disconnected graph $G$ such that
	$\chi ^{{\rm FAT}} (G) = L_1$ and $\chi (G) = L_2$.
\end{theorem}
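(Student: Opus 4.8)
The plan is to take $G$ to be the vertex-disjoint union of $L_1$ cliques of \emph{pairwise distinct} orders, the largest of which has order $L_2$; concretely, let $G$ consist of components $C_1, C_2, \dots, C_{L_1}$, where $C_i$ is a clique on $L_2 - i + 1$ vertices. Since $L_1 < L_2$, the smallest order $L_2 - L_1 + 1$ is at least $2$, so each $C_i$ genuinely contains an edge, and since $L_1 \ge 2$ the graph $G$ is disconnected. Because the chromatic number of a disjoint union is the maximum of the chromatic numbers of its components, we get $\chi(G) = \max_i |C_i| = L_2$. The lower bound $\chi^{\mathrm{FAT}}(G) \ge L_1$ is equally immediate: assigning color $i$ to every vertex of $C_i$ produces $L_1$ nonempty classes, and this is a FAT $L_1$-coloring with parameters $\alpha = 0$ and $\beta = 1$, exactly as in the first part of the proof of Theorem~\ref{Disconnected1}.

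The work lies in proving $\chi^{\mathrm{FAT}}(G) \le L_1$. I would fix a FAT $k$-coloring with nonempty classes $T_1, \dots, T_k$ and parameters $\alpha, \beta \in [0,1]$, assume $k \ge 2$ (otherwise there is nothing to prove), and use that $G$ has an edge so that $\beta + (k-1)\alpha = 1$. The argument then splits according to whether $\alpha = 0$.

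In the case $\alpha = 0$ one has $\beta = 1$, so for every vertex $v$, lying in its class $T_j$, the identity $e(v, T_j) = \deg(v)$ forces $N(v) \subseteq T_j$; since each $C_i$ is a clique that is an entire component, this means $C_i \subseteq T_j$ whenever $v \in C_i$. Hence every clique lies inside a single color class, the assignment sending $C_i$ to the unique class containing it is well-defined, and it is surjective because each nonempty class is a union of some of the $C_i$; therefore $k \le L_1$. The case $\alpha > 0$ is the crux. Here I would first note that no $C_i$ can be monochromatic: if $C_i \subseteq T_a$, then for $v \in C_i$ and any other class $T_b$ one gets $e(v, T_b) = 0$, contradicting $\alpha \deg(v) = \alpha(|C_i|-1) > 0$. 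So each $C_i$ meets at least two classes, and comparing the FAT conditions $e(v, T_b) = \alpha(|C_i|-1)$ for $v \in C_i \setminus T_b$ with $e(v, T_a) = \beta(|C_i|-1)$ for $v \in T_a \cap C_i$ shows that every class meeting $C_i$ contains exactly $\alpha(|C_i|-1)$ vertices of $C_i$ and that $(\alpha-\beta)(|C_i|-1) = 1$. This equation must hold for \emph{every} $i$ with the same $\alpha,\beta$, and applying it to $C_1$ and $C_2$ gives $|C_1| = |C_2|$, contradicting the distinctness of the clique orders. Hence $\alpha > 0$ is impossible, and the proof is complete.

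I expect the only delicate point to be the bookkeeping in the $\alpha > 0$ case — pinning down that every class meeting $C_i$ has the same number $\alpha(|C_i|-1)$ of vertices in $C_i$, which relies on $C_i$ not being contained in any single class (guaranteed by $k \ge 2$ together with the no-monochromatic-clique step). Everything else is routine. The conceptual role of the pairwise-distinct orders is precisely to destroy the "split each clique into equal-size parts" colorings that inflated $\chi^{\mathrm{FAT}}$ in Theorem~\ref{Disconnected1}: such colorings would demand the mutually incompatible equalities $(\alpha-\beta)(|C_i|-1) = 1$ for differing values of $|C_i|$.
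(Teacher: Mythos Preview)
Your proof is correct but follows a genuinely different route from the paper's. The paper builds $G$ from $L_1 - 1$ cliques of size $L_1$ together with one clique of size $L_2$, and its upper-bound argument pivots on the small clique $S_1$ of size $L_1$: either every color class meets $S_1$, which gives $k \le |S_1| = L_1$ directly by pigeonhole, or some class misses $S_1$, which forces $\alpha' = 0$ via a vertex of $S_1$ and then reduces to the same cliques-to-classes surjection you use. You instead take $L_1$ cliques of pairwise \emph{distinct} orders and split on $\alpha = 0$ versus $\alpha > 0$; in the latter case you extract the uniform identity $(\alpha - \beta)(|C_i| - 1) = 1$ and obtain a contradiction from the distinct sizes. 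Your approach has the virtue of isolating a clean structural constraint that any FAT coloring with $\alpha > 0$ imposes on a union of cliques; the paper's approach sidesteps that derivation entirely by planting a clique of exactly the right size $L_1$ so that the ``every class meets $S_1$'' case caps $k$ by counting alone.
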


\begin{proof}
	Let $S_1$, $S_2$, $\dots$, $S_{L_1 - 1}$ denote $L_1 - 1$ pairwise disjoint sets of size $L_1$, as follows:
	$$\begin{array}{lccr}
		S_i := \left\{ x ^{(i)} _1  ,   x ^{(i)} _2  ,  \dots ,  x ^{(i)} _{L_1}    \right\} &   & \ {\rm for}   &   i=1,2, \dots , L_1 - 1 .
	\end{array}$$
	In addition, consider another set $S_{L_1}$, disjoint from all of $S_1$, $S_2$, $\dots$, $S_{L_1 - 1}$, whose cardinality is $L_2$, given by:
	$$S_{L_1} := \left\{ x ^{\left(L_1\right)} _1  ,   x ^{\left(L_1\right)} _2  ,  \dots ,  x ^{\left(L_1\right)} _{L_2}    \right\} .$$
	
	\noindent
	Now, regard a graph $G$ with vertex set
	$$ V \left( G \right) := S_1 \cup S_2 \cup \dots \cup S_{L_1} ; $$
	and the edge set $E(G) := E_1 \cup E_2$, where $E_1$ and $E_2$ are defined as the following:
	\begin{itemize}
		\item $ E_1 := \left\{   x ^{(i)} _j  x ^{(i)} _k  \colon \ \ i \in \left[ L_1 - 1 \right]  \ {\rm and} \ 1 \leq j < k \leq L_1 \right\} , $
		\medskip
		\item $E_2 := \left\{   x ^{\left(L_1\right)} _j  x ^{\left(L_1\right)} _k  \colon \ \  1 \leq j < k \leq L_2 \right\} .$
	\end{itemize}
	
	\noindent
	The graph $G$ is defined as the union of $L_1$ pairwise disjoint complete graphs. Among these, the first $L_1 -1$ complete graphs have chromatic number $L_1$,
	whereas the $L_1$-th complete graph has chromatic number $L_2$. Therefore, the chromatic number of $G$ is equal to
	$$\chi (G)  = \max \left\{ L_1 , L_2 \right\} = L_2 .$$
	
	By assigning color $i$ to all vertices of $S_i$ for $i = 1, 2, \dots, L_1$, we obtain a FAT $L_1$-coloring of $G$ with parameters $\alpha = 0$ and $\beta = 1$.
	This construction immediately implies
	$$ \chi^{{\rm FAT}}(G) \geq L_1 . $$
	We now demonstrate that this inequality is in fact an equality, that is, $\chi ^{{\rm FAT}} (G) = L_1 .$
	
	\noindent
	Set $\chi^{{\rm FAT}}(G) = k$, and suppose that $G$ admits a FAT $k$-coloring with $k$ nonempty color classes
	$T_1$, $T_2$, $\dots$, $T_k$, accompanied by parameters $\alpha' \in [0,1]$ and $\beta' \in [0,1]$.
	Given that $\chi^{{\rm FAT}}(G) \geq L_1$, our task reduces to proving that $k \leq L_1$.
	    
	\noindent
	The proof is carried forward by a case analysis, divided into two parts.
	
	\medskip
	
	\noindent
	{\bf Case I:} Suppose that $S_1 \cap T_b \neq \varnothing$ for every $b \in [k]$.
	
	\noindent
	In this situation, each of the color classes $T_1$, $T_2$, $\dots$, $T_k$ intersects $S_1$.
	Consequently, the number of color classes cannot exceed the number of vertices contained in $S_1$. 
	Therefore, $k \leq \left| S_1 \right| = L_1$.
	
	\medskip
	
	\noindent
	{\bf Case II:} Assume that $S_1 \cap T_b = \varnothing$ for some $b \in [k]$.
	
	\noindent
	The vertex $x^{(1)}_1$ and all of its neighbors lie within $S_1$.
	Hence, the condition $S_1 \cap T_b = \varnothing$ forces
	$$ e \left(  x ^{(1)} _1  ,  T_b  \right) = 0. $$
	On the other hand, because $x^{(1)}_1 \notin T_b$, the definition of a FAT coloring yields
	$$ e \left(  x ^{(1)} _1  ,  T_b  \right) = \alpha ' \ldotp \deg_{G} \left(  x ^{(1)} _1  \right) = \alpha ' \ldotp \left( L_1  -  1 \right) .$$
	Thus,
	$$\alpha ' \ldotp \left( L_1  -  1 \right) = 0 .$$
	Since $L_1 \geq 2$, we infer that
	$$ \alpha' = 0 .$$
	Moreover, the relation $\beta' + (k-1)\alpha' = 1$ immediately gives
	$$\beta' = 1 . $$
	Now, fix any clique $S_i \in \left\{ S_1, S_2, \dots, S_{L_1} \right\}$ and define
	$$ a_i := \min \left\{ j \in [k] \ \colon \ \  S_i \cap T_j \neq \varnothing \right\}. $$
	Choose a vertex $z \in S_i \cap T_{a_i}$. Since $z \in T_{a_i}$, we obtain
	$$ e \left( z , T_{a_i}\right) = \beta ' \ldotp \deg _G (z) = \deg _G (z) .$$
	Consequently, all neighbors of $z$ must also lie in $T_{a_i}$, and therefore,
	$$	S_i \subseteq T_{a_i} . $$
	This shows that each clique $S_i$ is entirely contained in a unique color class $T_{a_i}$. Conversely,
	every color class $T_j$ is the union of those cliques $S_\ell$ intersecting it. In other words,
	every $T_j$ equals the union of all cliques $S_{\ell}$ with $S_{\ell} \cap  T_j \neq \varnothing$.
	Since $T_1$, $T_2$, $\dots$, $T_k$ are nonempty, the mapping
	$$\begin{array}{rcl}
		f \colon \ \left\{ S_1 , S_2 , \dots , S_{L_1} \right\} & \longrightarrow & \left\{ T_1 , T_2 , \dots , T_k \right\} \\
		S_i                        & \longmapsto     &              T_{a_i}
	\end{array}$$
	is well-defined and surjective. Hence, $k \leq L_1$, completing the argument.
\end{proof}

\medskip

Now we turn to Questions \ref{Q1}, \ref{Q2}, \ref{Q3}, \ref{Q4}, and \ref{Q5}, 
considered within the class of disconnected graphs. 
The conclusions are summarized in the following Remark.

\medskip

\begin{remark}
	Theorem \ref{Disconnected1} shows that for every arbitrary positive integer $L$,
	there exists a sequence $\left\{ G_n \right\}_{n=1} ^{\infty}$ of disconnected graphs
	for which $\chi \left( G_n \right) = L$ for each positive integer $n$, while
	the sequence $\chi^{{\rm FAT}} \left( G_n \right)$ diverges to $+\infty$ as $n$ tends to infinity.
	Thus, Question \ref{Q4} is affirmed, while Question \ref{Q2} receives a negative answer.
	In addition, Theorem \ref{Disconnected2} implies that for every arbitrary positive integer $L > 1$,
	there exists a sequence $\left\{ H_n \right\}_{n=1} ^{\infty}$ of disconnected graphs
	such that $\chi^{{\rm FAT}} \left( H_n \right) = L$ for all positive integers $n$, whereas
	the sequence $\chi \left( H_n \right)$ diverges to $+\infty$ as $n$ tends to infinity.
    Consequently, Question \ref{Q5} admits an affirmative resolution, whereas Question \ref{Q3} is negated.
    Moreover, in answer to Question \ref{Q1}, the sequences $\left\{ G_n \right\}_{n=1} ^{\infty}$ and $\left\{ H_n \right\}_{n=1} ^{\infty}$ 
    demonstrate that both differences $\chi^{{\rm FAT}}(G) - \chi(G)$ and $\chi(H) - \chi^{{\rm FAT}}(H)$ 
    remain unbounded, even within the class of disconnected graphs.
\end{remark}

\medskip

Let $L_1$ and $L_2$ be two arbitrary positive integers with $L_1 < L_2$.
Theorem \ref{Disconnected1} asserts that there exists a disconnected graph
$G_1$ satisfying $\chi \left( G_1 \right) = L_1$ and $\chi ^{{\rm FAT}} \left( G_1 \right) = L_2$.
Moreover, Theorem \ref{Disconnected2} guarantees the existence of some disconnected graph
$G_2$ with $\chi ^{{\rm FAT}} \left( G_2 \right) = L_1$ and $\chi \left( G_2 \right) = L_2$, provided that
$L_1 > 1$.
Now, the following natural Question arises.

\medskip
	
\begin{question} \label{Q6}
	Does Theorem \ref{Disconnected2} remain valid without the condition $L_1 > 1$?
\end{question}

\medskip

Question \ref{Q6} is answered in the negative in the following Remark.
	
\begin{remark}	
	Let $G$ be any arbitrary disconnected graph consisting of the components $G_1$, $G_2$, $\dots$, $G_{\ell}$.
	If we assign the color $i$ to all vertices of $G_i$ for $i = 1 , 2 , \dots , \ell$, then
	$G_1$, $G_2$, $\dots$, $G_{\ell}$
	constitute the color classes of a FAT $\ell$-coloring of $G$ with corresponding parameters $\alpha = 0$ and $\beta = 1$.
	Consequently, $\chi ^{{\rm FAT}} (G) \geq \ell$. We conclude that the FAT chromatic number of every disconnected graph is
	strictly greater than $1$. This shows that the condition $L_1 > 1$ is necessary in Theorem \ref{Disconnected2}.
\end{remark}

\subsection{The Case of Connected Graphs}

In this subsection we address Questions \ref{Q1}, \ref{Q2}, \ref{Q3}, \ref{Q4}, and \ref{Q5}, for the class of connected graphs.
To this end, we state the following theorem.

\begin{theorem} \label{Connected}
	For each odd positive integer $n \geq 5$, there exist some connected graphs $G_1$
	and $G_2$ for which
	$$\begin{array}{cccc}
		\chi \left( G_1 \right) =2 , \  &  \chi ^{{\rm FAT}} \left( G_1 \right) = n, \ & \chi \left( G_2 \right) =n,\  & \chi ^{{\rm FAT}} \left( G_2 \right) = 2. 
	\end{array}$$ 
\end{theorem}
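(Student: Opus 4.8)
The plan is to produce the two graphs separately; in each case I would first pin down the ordinary chromatic number and then bound the FAT chromatic number from above and below. For $G_1$ I would take $G_1=K_{n,n}$, the complete bipartite graph with parts $A=\{a_1,\dots,a_n\}$ and $B=\{b_1,\dots,b_n\}$; it is connected and bipartite with an edge, so $\chi(G_1)=2$. For the lower bound $\chi^{{\rm FAT}}(G_1)\ge n$, the partition into the $n$ classes $V_i=\{a_i,b_i\}$ is a FAT $n$-coloring with $\alpha=\beta=\tfrac1n$, since every vertex has all its neighbours on the opposite side and therefore exactly one neighbour in each $V_i$. For the upper bound I would analyse an arbitrary FAT $k$-coloring: for $v\in A$ one has $e(v,V_i)=\bigl|V_i\cap B\bigr|$, which forces $\bigl|V_i\cap B\bigr|$ to equal $\beta n$ whenever $V_i$ meets $A$ and $\alpha n$ whenever $A\not\subseteq V_i$. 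Hence either $\alpha=\beta$, in which case every class meets $B$ (and $A$) in exactly $\alpha n=n/k$ vertices, so $k\mid n$ and $k\le n$; or every class lies entirely inside $A$ or inside $B$, and connectedness collapses this to the proper $2$-coloring. In all cases $k\le n$, so $\chi^{{\rm FAT}}(G_1)=n$.

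For $G_2$ I would take $G_2=\overline{C_4\cup C_{2n-4}}$, the complement of the disjoint union of a $4$-cycle and a $(2n-4)$-cycle (both are genuine cycles for $n\ge 5$). The complement of a disconnected graph is connected, so $G_2$ is connected; and since proper colourings of a complement correspond to partitions into cliques of the original graph, $\chi(G_2)$ is the clique-cover number of $C_4\cup C_{2n-4}$, namely $2+(n-2)=n$. Moreover $G_2$ is $(2n-3)$-regular: each of its $2n$ vertices is non-adjacent only to itself and to its two neighbours on its own cycle. To get $\chi^{{\rm FAT}}(G_2)\ge 2$, colour each cycle by its proper $2$-coloring, so that each class has $2$ vertices on $C_4$ and $n-2$ vertices on $C_{2n-4}$; a direct count then gives $e(v,V_{c(v)})=n-1$ and $e(v,V_{c'})=n-2$ for the other class, so this is a FAT $2$-coloring with $\beta=\tfrac{n-1}{2n-3}$ and $\alpha=\tfrac{n-2}{2n-3}$.

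The core of the argument is $\chi^{{\rm FAT}}(G_2)\le 2$. Given a FAT $k$-coloring with classes $V_1,\dots,V_k$ and parameters $\alpha,\beta$, I would use that $e(v,V_i)=\bigl|V_i\bigr|-(\text{number of the two cycle-neighbours of }v\text{ lying in }V_i)$ when $v\notin V_i$, and one less when $v\in V_i$. This shows that a vertex in class $c$ has exactly $\mu_c:=\bigl|V_c\bigr|-1-\beta(2n-3)$ of its two cycle-neighbours in $V_c$ and exactly $\nu_{c'}:=\bigl|V_{c'}\bigr|-\alpha(2n-3)$ of them in each other class $V_{c'}$, with $\mu_c+\sum_{c'\neq c}\nu_{c'}=2$. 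If some $\nu_{c'}=0$ then $V_{c'}$ is a union of cycle-components, hence equals the $C_4$-part or the $C_{2n-4}$-part; but a vertex of such a class has both cycle-neighbours inside it, forcing $\mu_{c'}=2$ and hence all remaining $\nu$'s to vanish, which would make every class a whole component and so $k\le 2$. Therefore for $k\ge 3$ all $\nu_{c'}\ge 1$, so $k-1\le\sum_{c'\neq c}\nu_{c'}\le 2$, i.e. $k\le 3$; and for $k=3$ one is forced to have $\mu_c=0$ and $\nu_{c'}=1$ for every class, which means the coloring restricted to each cycle is proper with every vertex seeing both other colours — the periodic pattern $1,2,3,1,2,3,\dots$ — which cannot close up on a cycle of length $4$. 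So no FAT $k$-coloring with $k\ge 3$ exists and $\chi^{{\rm FAT}}(G_2)=2$.

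The step I expect to be the main obstacle is exactly this last verification for $G_2$: translating the FAT conditions into the equation $\mu_c+\sum_{c'\neq c}\nu_{c'}=2$ and then carefully eliminating the degenerate configurations in which a colour class coincides with an entire cycle of the construction, followed by the tight case $k=3$. The reason the $4$-cycle is used rather than a cycle whose length is divisible by $3$ is precisely to destroy the one surviving possibility; all the remaining bookkeeping (regularity, the clique-cover computation for $\chi$, and the explicit FAT $2$-coloring) is routine.
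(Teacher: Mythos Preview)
Your proof is correct but takes a genuinely different route from the paper's.

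For $G_1$ the paper uses $K_{n,n}$ minus a perfect matching, which is $(n-1)$-regular, and obtains the upper bound $\chi^{\mathrm{FAT}}(G_1)\le n$ immediately from the cited inequality $\chi^{\mathrm{FAT}}\le\delta+1$ for connected graphs. Your $G_1=K_{n,n}$ is $n$-regular, so that shortcut is unavailable; instead you argue directly from $e(v,V_i)=|V_i\cap B|$ for $v\in A$, splitting into $\alpha=\beta$ (forcing $k\mid n$) and $\alpha\neq\beta$ (forcing the partition $\{A,B\}$). This is self-contained and avoids invoking the external bound. One minor remark: in the case $\alpha\neq\beta$ the phrase ``connectedness collapses this'' is a little loose; what actually collapses it is the FAT constraint itself (two classes inside $A$ would force $\alpha=0$, hence $\beta=1$, hence $|V_j\cap B|=n$ for the class of any $v\in A$, a contradiction), but the conclusion $k\le 2$ is correct either way.

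For $G_2$ the constructions diverge more sharply. The paper attaches $\tfrac{n-1}{2}$ pendant triangles to each vertex of $K_n$; the resulting degree-$2$ vertices make the analysis short: $\delta+1=3$ bounds $\chi^{\mathrm{FAT}}$ above, and examining a single degree-$2$ vertex in a putative FAT $3$-coloring forces $\alpha=\tfrac12$, $\beta=0$, so the $K_n$ would be properly $3$-colored, contradicting $n\ge 5$. Your $G_2=\overline{C_4\cup C_{2n-4}}$ is $(2n-3)$-regular, so no low-degree shortcut exists; instead you exploit that every vertex has exactly two \emph{non}-neighbors, turning the FAT conditions into the tight equation $\mu_c+\sum_{c'\neq c}\nu_{c'}=2$ with all terms nonnegative integers. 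The case analysis (some $\nu_{c'}=0$ forces whole cycles as classes; all $\nu_{c'}\ge 1$ forces $k\le 3$; $k=3$ forces a $3$-periodic coloring incompatible with $C_4$) is clean and correct. What your approach buys is a template for bounding $\chi^{\mathrm{FAT}}$ on complements of sparse graphs via non-neighbor counts; what the paper's buys is a one-line upper bound from a single low-degree vertex.
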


\begin{proof}
	Let $A:= \left\{ x_1 , x_2 , \dots , x_n \right\}$ and $B:= \left\{ y_1 , y_2 , \dots , y_n \right\}$.
	Also, let $G_1$ be a graph with vertex set $V \left( G_1 \right) := A \cup B$
	whose edge set equals
	$$E \left( G_1 \right) := \bigl\{ x_i y_j \ \colon \ \   i,j \in [n] \mbox{ and } i\neq j \bigr\} .$$
	
	\noindent
	Indeed, $G_1$ is a connected bipartite graph which is obtained by removing the edges of the
	perfect matching
	$\bigl\{ x_1 y_1 , x_2 y_2 , \dots , x_n y_n \bigr\}$
	from the complete bipartite graph $K_{n,n}$ with one part $A$ and the other part $B$.
	
	\medskip
	
	\noindent
	The bipartite graph $G_1$ is an $(n-1)$-regular graph with chromatic number $\chi \left( G_1 \right) =2 .$
	
	\noindent
	It was shown in \cite{beers20252} that the FAT chromatic number of every connected graph with minimum degree $\delta$ never
	exceeds $\delta + 1$. Now, 
	since $\delta \left( G_1 \right) = n-1$, it follows that
	$$\chi ^{{\rm FAT}} \left( G_1 \right) \leq \delta  \left( G_1 \right) + 1 = n .$$
	Now, in order to show that $\chi ^{{\rm FAT}} \left( G_1 \right) = n ,$ it suffices to find a FAT coloring of $G_1$
	with exactly $n$ color classes. For each $i$ in $[n]$, we assign the color $i$ to the vertices $x_i$ and $y_i$; forming the $i$-th
	color class
	$$C_i := \bigl\{ x_i , y_i \bigr\} .$$
	For each vertex $z \in V \left( G_1 \right)$ and every color class $C_i \in \bigl\{ C_1 , C_2 , \dots , C_n \bigr\}$ we have:
	$$
	e \left( z , C_i\right) =
	\begin{cases}
		1   &  \mbox{ if }  \ \   z \notin C_i \\
		0   &  \mbox{ if }  \ \   z \in C_i . 
	\end{cases}
	$$
	Hence, by setting
	$$\begin{array}{lcccr}
		\alpha _{G_1} := \frac{1}{n-1}  &     &   {\rm and}    &     &   \beta _{G_1} := 0 , 
	\end{array}$$
	and the fact that $\deg _{G_1} (z) = n-1$, one finds that
	$$\begin{array}{lcccr}
		\alpha _{G_1} . \deg _{G_1} (z) = 1    &   &    {\rm and}  &     &   \beta _{G_1} . \deg _{G_1} (z) = 0 . 
	\end{array}$$
	Accordingly, $\beta _{G_1} + (n-1) \alpha _{G_1} = 1$ and
	for each vertex $z \in V \left( G_1 \right)$ and every color class $C_i \in \bigl\{ C_1 , C_2 , \dots , C_n \bigr\}$
	the condition
	$$
	e \left( z , C_i\right) =
	\begin{cases}
		\alpha _{G_1} . \deg _{G_1} (z)   &  \mbox{ if }  \ \   z \notin C_i \\
		\beta _{G_1} . \deg _{G_1} (z)    &  \mbox{ if }  \ \   z \in C_i  
	\end{cases}
	$$
	holds. Thus, $C_1$, $C_2$, $\dots$, $C_n$ are the color classes of a FAT $n$-coloring
	of $G_1$.
	We conclude that $\chi ^{{\rm FAT}} \left( G_1 \right) = n .$
	
	Thus far, we have constructed the connected graph $G_1$
	in such a way that
	$$\begin{array}{lcccr}
		\chi \left( G_1 \right) = 2  &     &   {\rm and}    &     &   \chi ^{{\rm FAT}} \left( G_1 \right) = n . 
	\end{array}$$
	
	\medskip
	
	Following the completion of the earlier stage, we are prepared to undertake the construction of graph $G_2$.
	
	\noindent
	We define three sets $V_1$, $V_2$, and $V_3$, as follows:
	\medskip
	\medskip
	\begin{itemize}
		\item $V_1 := \bigl\{ w_1 , w_2 , \dots , w_n \bigr\}$;
		\medskip
		\medskip
		\item $V_2 := \biggl\{ u ^{(i)} _j \colon \ \ i \in [n] \mbox{ and } j\in \left[ \frac{n-1}{2} \right]   \biggr\}$;
		\medskip
		\item $V_3 := \biggl\{ v ^{(i)} _j \colon \ \ i \in [n] \mbox{ and } j\in \left[ \frac{n-1}{2} \right]   \biggr\}$.
	\end{itemize}
	\medskip
	Now, the vertex set of $G_2$ is defined as $V \left( G_2 \right) := V_1 \cup V_2 \cup V_3$.
	In order to define the edge set of $G_2$, first we consider some matchings $M_1$, $M_2$, $\dots$, $M_n$, each of size
	$\frac{n-1}{2}$. 
	
	\noindent
	For each $i \in \left[ n \right]$, the matching $M_i$ equals
	$$M_i := \biggl\{   u ^{(i)} _1 v^{(i)} _1  , u ^{(i)} _2 v^{(i)} _2  , \dots ,  u ^{(i)} _{\frac{n-1}{2}} v^{(i)} _{\frac{n-1}{2}}   \biggr\} .$$
	Now, the edge set of $G_2$ equals
	$E \left( G_2 \right) := E_1 \cup E_2 \cup E_3 \cup E_4$, where $E_1$, $E_2$, $E_3$, and $E_4$ are defined as follows:
	\medskip
	\begin{itemize}
		\item $E_1 := \bigl\{ w_i w_j \colon \ \ 1 \leq i < j \leq n \bigr\}$;
		\medskip
		\medskip
		\item $E_2 := \biggl\{ w_i u ^{(i)} _j \colon \ \ i \in [n] \mbox{ and } j\in \left[ \frac{n-1}{2} \right]   \biggr\}$;
		\medskip
		\item $E_3 := \biggl\{ w_i v ^{(i)} _j \colon \ \ i \in [n] \mbox{ and } j\in \left[ \frac{n-1}{2} \right]   \biggr\}$;
		\medskip
		\medskip
		\item $E_4 := M_1 \cup M_2 \cup \dots \cup M_n$. 
	\end{itemize}
	\medskip
	Formally, $G_2$ is obtained from the complete graph $K_n$ on vertices $w_1,w_2,\dots,w_n$ by attaching to each vertex $w_i$ exactly $\tfrac{n-1}{2}$ pendant triangles
	$$
	w_i u^{(i)}_j v^{(i)}_j \qquad \ \mbox{ for all }j = 1 , 2 , \dots , \tfrac{n-1}{2} ;
	$$
	each such triangle consisting of the edges $w_i u^{(i)}_j$, $w_i v^{(i)}_j$, and $u^{(i)}_j v^{(i)}_j$.
    
    \medskip
    
    For each $i \in [n]$ and every $j \in \big[ \frac{n-1}{2}\big]$, we have
    $$\begin{array}{lcccr}
    	\deg _{G_2} \left( w_i \right) = 2(n-1)  &   &  {\rm and}  &   &   \deg _{G_2} \left(  u_j ^{(i)}  \right) =  \deg _{G_2} \left(  v_j ^{(i)}  \right) = 2 .
    \end{array}$$	
	
	Let us put
	$$\begin{array}{lllllllccr}
		S_1 := V_1,  &  &  S_2 := V_2 \cup V_3 ,  &  &  \alpha_{G_2} := \frac{1}{2}, &  &  {\rm and}  &  &  \beta_{G_2} := \frac{1}{2} .
	\end{array}$$
	Now, for each vertex $z \in S_1$, we have
	\medskip
	\begin{itemize}
		\item $e \left( z , S_1 \right) = n-1 = \frac{1}{2} \deg _{G_2} (z) = \beta  _{G_2} . \deg _{G_2} (z)$,
		\medskip
		\item $e \left( z , S_2 \right) = n-1 = \frac{1}{2} \deg _{G_2} (z) = \alpha _{G_2} . \deg _{G_2} (z)$. 
	\end{itemize}
	\medskip
	Also, for each vertex $z \in S_2$, we have
	\medskip
	\begin{itemize}
		\item $e \left( z , S_1 \right) = 1 = \frac{1}{2} \deg _{G_2} (z) = \alpha _{G_2} . \deg _{G_2} (z)$,
		\medskip
		\item $e \left( z , S_2 \right) = 1 = \frac{1}{2} \deg _{G_2} (z) = \beta  _{G_2} . \deg _{G_2} (z)$. 
	\end{itemize}
	\medskip
	We conclude that for each vertex $z \in V\left( G_2 \right)$ and every $S_i \in \left\{ S_1 , S_2 \right\}$, we have
	\medskip
	$$
	e \left( z , S_i\right) =
	\begin{cases}
		\alpha _{G_2} . \deg _{G_2} (z)   &  \mbox{ if }  \ \   z \notin S_i \\
		\beta  _{G_2} . \deg _{G_2} (z)   &  \mbox{ if }  \ \   z \in S_i ;
	\end{cases}
	$$
	and therefore, $S_1$ and $S_2$ provide the color classes of a FAT $2$-coloring of $G_2$.
	Accordingly,
		$$ \chi ^{{\rm FAT}} \left( G_2 \right) \geq  2 . $$
	Since $G_2$ is connected, its FAT chromatic number is less than or equal to $\delta \left( G_2 \right) +1$. Therefore,
	$$ \chi ^{{\rm FAT}} \left( G_2 \right) \leq  \delta \left( G_2 \right) +1 = 2 + 1 = 3 . $$
	Now, we claim that
	$ \chi ^{{\rm FAT}} \left( G_2 \right) = 2 . $
	Suppose, contrary to our claim, that
	$ \chi ^{{\rm FAT}} \left( G_2 \right) \neq 2 . $
	So, $ \chi ^{{\rm FAT}} \left( G_2 \right) = 3  $,
	and $G_2$ admits some FAT $3$-coloring with three color classes, say $T_1$, $T_2$, and $T_3$, together with corresponding
	parameters $\alpha$ and $\beta$.
	On account of the definition of FAT colorings, the color classes $T_1$, $T_2$, and $T_3$
	must be nonempty. Hence, the connectedness of $G_2$ implies that for two distinct color classes, say $T_1$ and $T_2$,
	there exists some edge of $G_2$, say $t_1 t_2$, in such a way that $t_1 \in T_1$ and $t_2 \in T_2$.
	Therefore,
	$$\begin{array}{lcccr}
		e \left( t_1 , T_2 \right) \neq 0, &  &  {\rm and}  &  &  e \left( t_1 , T_2 \right) =\alpha . \deg _{G_2} \left( t_1 \right) .
	\end{array}$$
	Hence, $\alpha$ must be positive.
	
	\noindent
	Let $i$ be an arbitrary element of $[n]$ and $j$ be an arbitrary element of $\left[ \frac{n-1}{2} \right]$.
	We may assume, without loss of generality, that
	$u _j ^{(i)} \in T_1$. Since the degree of 
	$ u_j ^{(i)} $ is $2$, one finds that
	$$e \left(  u_j ^{(i)} , T_2  \right) =  e \left(  u_j ^{(i)} , T_3  \right) =  \alpha . \deg _{G_2} \left(  u_j ^{(i)}  \right) =  2  \alpha > 0 .$$
	Thus, the vertex $ u_j ^{(i)} $ is adjacent to precisely one vertex of $T_2$ and precisely one vertex of $T_3$.
	Therefore, $2 \alpha = 1$; which implies $\alpha = \frac{1}{2}$.
	Since $\beta + (3-1) \alpha = 1$, we find that $\beta = 0$.
	This shows that no two distinct vertices in $\left\{ w_1 , w_2 , \dots , w_n \right\}$
	belong to the same color class.
	Since all vertices lie in the union of the three color classes $T_1$ and $T_2$ and $T_3$,
	it would follow that 
	$n \leq 3$; which contradicts the fact that $n\geq 5$.
	So, we conclude that
	$ \chi ^{{\rm FAT}} \left( G_2 \right) = 2  $;
	as desired.
\end{proof}

\medskip

We now proceed to examine Questions \ref{Q1}, \ref{Q2}, \ref{Q3}, \ref{Q4}, and \ref{Q5}, under the restriction to connected graphs.
The conclusions of this analysis are condensed in the subsequent Remark.

\medskip

\begin{remark}
	Theorem \ref{Connected} furnishes a sequence $\left\{ G_n \right\}_{n=1} ^{\infty}$ of connected graphs
	for which $\chi \left( G_n \right) = 2$ for each positive integer $n$, while
	the sequence $\chi^{{\rm FAT}} \left( G_n \right)$ diverges to $+\infty$ as $n$ tends to infinity.
    Thus Question \ref{Q4} is affirmed, whereas Question \ref{Q2} is answered negatively.
	Also, Theorem \ref{Connected} produces a sequence $\left\{ H_n \right\}_{n=1} ^{\infty}$ of connected graphs
	such that $\chi^{{\rm FAT}} \left( H_n \right) = 2$ for all positive integers $n$, whereas
	the sequence $\chi \left( H_n \right)$ diverges to $+\infty$ as $n$ tends to infinity.
	Hence Question \ref{Q5} is settled affirmatively and Question \ref{Q3} negatively.
	Moreover, with regard to Question \ref{Q1}, the sequences 
	$\left\{ G_n \right\}_{n=1} ^{\infty}$
	and 
	$\left\{ H_n \right\}_{n=1} ^{\infty}$
	exhibit that 
	$\chi^{{\rm FAT}}(G) - \chi(G)$ and $\chi(H) - \chi^{{\rm FAT}}(H)$
	are unbounded, already among connected graphs.
\end{remark}

\section{Concluding Remarks}

In the preceding section of this paper, in two distinct subsections, we examined Questions \ref{Q1}–\ref{Q5} separately
for connected and for disconnected graphs. In the present section, we dispense with that dichotomy and address these
Questions in their full generality. The following theorem and remark furnish concise answers to these Questions without any
restriction on whether the graphs are connected.

\begin{theorem} \label{General}
	For each integer $n \geq 3$, there exist graphs $G_1$
	and $G_2$ satisfying
	$$\begin{array}{cccc}
		\chi \left( G_1 \right) =1 , \  &  \chi ^{{\rm FAT}} \left( G_1 \right) = n, \ & \chi \left( G_2 \right) =n,\  & \chi ^{{\rm FAT}} \left( G_2 \right) = 1. 
	\end{array}$$ 
\end{theorem}

\medskip

\begin{proof}
	Let $G_1$ be $\bar{K}_n$, which is the edgeless graph on $n$ vertices.
	In view of the Proof of Theorem \ref{Disconnected1}, the graph $G_1$
	satisfies
	$$\begin{array}{llcrr}
		\chi \left( G_1 \right) = 1  &  &  {\rm and}  &  &  \chi ^{{\rm FAT}} \left( G_1 \right) = n .
	\end{array}$$
	
	Let $G_2$ be a graph with $n+1$ vertices, whose vertex set equals
	$$V \left( G_2 \right) := \left\{ a_1 , a_2 , \dots , a_{n+1} \right\} .$$
	Also, let $E \left( G_2 \right) := E_1 \cup E_2$, where $E_1$ and $E_2$
	are defined as the following:
	$$\begin{array}{llcrr}
		E_1 := \left\{ a_i a_j : \ \ 1 \leq i < j \leq n \right\}  &   &   {\rm and}  &   &  E_2 := \left\{ a_1 a_{n+1} \right\}.
	\end{array}$$
	Thus $G_2$ is obtained from the complete graph on $n$ vertices $a_1$, $a_2$, $\dots$, $a_n$,
	by adjoining a vertex $a_{n+1}$ that is pendant to $a_1$.
	
	The graph $G_2$ satisfies $\chi \left( G_2 \right) = n$.
	Since $G_2$ is a connected graph and $\delta \left( G_2 \right) = 1$,
	it follows that
	$$ \chi ^{{\rm FAT}} \left( G_2 \right) \leq \delta \left( G_2 \right) + 1 = 2 . $$
	We claim that
	$ \chi ^{{\rm FAT}} \left( G_2 \right) = 1 $.
	Suppose, contrary to the claim, that
	$ \chi ^{{\rm FAT}} \left( G_2 \right) \neq 1 $.
	Therefore,
	$ \chi ^{{\rm FAT}} \left( G_2 \right) = 2 $,
	and $G_2$ admits some FAT $2$-coloring with nonempty color classes
	$V_1$ and $V_2$ together with corresponding parameters $\alpha$ and $\beta$.
	Without loss of generality, one may assume that
	$a_{n+1} \in V_2$.
	Since $G_2$ is connected and $V_1$ and $V_2$ are nonempty,
	we must have
	$\alpha > 0$.
	This shows that
	$$ \alpha = \alpha \times 1 = \alpha . \deg_{G_2} \left( a_{n+1} \right) = e \left( a_{n+1} , V_1 \right) \in \{0,1\} .$$
	Accordingly,
	$$\begin{array}{llcrr}
		\alpha = e \left( a_{n+1} , V_1 \right)  = 1  &  &  {\rm and}  &  &  a_1 \in V_1 .
	\end{array}$$
	Thus, the fact that $ a_1 \in V_1 $ implies
	$$ e \left( a_1 , V_2 \right) = \alpha . \deg_{G_2} \left( a_1 \right) = \deg_{G_2} \left( a_1 \right)  =
	\left|  V  \left( G_2 \right)  \right| - 1 .$$
	Hence,
	$$\begin{array}{llcrr}
		V_1 = \left\{ a_1 \right\}  &  &  {\rm and}  &  &  V_2 = V \left(  G_2  \right) \setminus \left\{ a_1 \right\} .
	\end{array}$$
	We conclude that $a_3 \in V_2$. On the other hand, since $a_2 \in V_2$,
	it follows that
	$$ e \left( a_2 , V_1 \right) = \alpha . \deg_{G_2} \left( a_2 \right) = \deg_{G_2} \left( a_2 \right)  . $$
	Accordingly, all neighbors of $a_2$ must belong to $V_1$.
	In particular,
	$a_3 \in V_1$, which contradicts the former fact that $a_3 \in V_2$.
	The contradiction shows that the assumption was false, and therefore the claim holds.
	This completes the proof.    
\end{proof}

\medskip

We conclude with the following remark.
\begin{remark}
	According to Theorem \ref{General}, Question \ref{Q1} is answered negatively with respect to boundedness: 
	each of $\chi^{{\rm FAT}}(G) - \chi(G)$ and $\chi(H) - \chi^{{\rm FAT}}(H)$ is unbounded both above and below.
	Moreover,
	Theorem \ref{General} implies that
	Questions \ref{Q2} and \ref{Q3} are answered negatively,
	whereas Questions \ref{Q4} and \ref{Q5} are settled affirmatively.
\end{remark}

\vskip 0.4 true cm

\bibliographystyle{amsplain}
\def\cprime{$'$} \def\cprime{$'$}
\providecommand{\bysame}{\leavevmode\hbox to3em{\hrulefill}\thinspace}
\providecommand{\MR}{\relax\ifhmode\unskip\space\fi MR }
\providecommand{\MRhref}[2]{%
	\href{http://www.ams.org/mathscinet-getitem?mr=#1}{#2}
}
\providecommand{\href}[2]{#2}



\bigskip
\bigskip


{\footnotesize {\bf Saeed Shaebani}\; \\ {School of Mathematics and Computer Science}, {Damghan University,} {Damghan, Iran.}\\
{\tt Email: shaebani@du.ac.ir}\\

\end{document}